\documentclass[12pt]{article}
\usepackage{amsmath,amssymb,amsthm,comment,cite}

\newtheorem{theorem}{Theorem}[section]
\newtheorem{thmy}{Theorem}

\newtheorem{lemma}[theorem]{Lemma}

\newcommand{\dd}{\displaystyle }

\def\barr{\begin{array}}
\def\earr{\end{array}}

\title{On a function related to the Chermak-Delgado measure of a finite group}
\author{Marius T\u arn\u auceanu}
\date{May 10, 2026}

\begin{document}

\maketitle

\begin{abstract}
In this note, we give some conditions for commutativity and (super)solvability of a finite group $G$ based on a function related to the Chermak-Delgado measure $m(G)$. The possible values of this function are also investigated.
\end{abstract}

{\small
\noindent
{\bf MSC2020\,:} Primary 20D30; Secondary 20D60, 20D99.

\noindent
{\bf Key words\,:} Chermak-Delgado measure, abelian group, (super)solvable group.}

\section{Introduction}

In the last years, there has been growing interest in detecting structural properties of a finite group $G$ through the study of some functions related to elements or to subgroups of $G$ (see for example \cite{5}). In what follows, we introduce and study the function
\begin{equation}
f(G)=\dd\frac{m(G)}{|G|^2}\,,\nonumber
\end{equation}where $m(G)$ denotes the Chermak-Delgado measure of $G$. Recall that the \textit{Chermak-Delgado measure} of a subgroup $H$ of $G$ is defined by
\begin{equation}
m_G(H)=|H||C_G(H)|\nonumber
\end{equation}and
\begin{equation}
m(G)={\rm max}\{m_G(H)\mid H\leq G\}.\nonumber
\end{equation}Also, the set 
\begin{equation}
{\cal CD}(G)=\{H\leq G\mid m_G(H)=m(G)\}\nonumber
\end{equation}\newpage\noindent forms a modular self-dual sublattice of the subgroup lattice $L(G)$ of $G$, which is called the \textit{Chermak-Delgado lattice} of $G$. It was first introduced by Chermak and Delgado \cite{3}, and revisited by Isaacs \cite{6}. Some basic properties of the Chermak-Delgado lattices that are used in our note are as follows:
\begin{itemize}
\item[$\cdot$] if $H\in {\cal CD}(G)$, then $C_G(H)\in {\cal CD}(G)$ and $C_G(C_G(H))=H$;
\item[$\cdot$] the minimum subgroup $M(G)$ of ${\cal CD}(G)$ (called the \textit{Chermak-Delgado subgroup} of $G$) is characteristic, abelian, and $Z(G)\subseteq M(G)$;
\item[$\cdot$] the maximum subgroup $M$ of ${\cal CD}(G)$ is characteristic and ${\cal CD}(M)={\cal CD}(G)$.
\end{itemize}

Clearly, we have $f(G)\in(0,1]$ for any finite group $G$, and $f(G)=1$ if and only if $G$ is abelian. It is also clear that the function $f$ is multiplicative, i.e. 
\begin{equation}
f(G_1\times G_2)=f(G_1)f(G_2)
\end{equation}for all finite groups $G_1$, $G_2$ of coprime orders, and that it satisfies:
\begin{itemize}
\item[{\rm a)}]$f(G)=\dd\frac{1}{[G:M(G)][G:M]}$\,;
\item[{\rm b)}]$f(G)=\dd\frac{f(M)}{[G:M]^2}$\,;\hspace{88.5mm}(2)
\item[{\rm c)}]$f(G)\geq\dd\frac{1}{[G:Z(G)]}$\,.
\end{itemize}

Using the function $f$, we are able to provide conditions of commutativity and (super)solvability of $G$. 

\begin{theorem}
Let $G$ be a finite group. The following hold:
\begin{itemize}
\item[{\rm 1.}] If $f(G)>\frac{1}{4}$\,, then $G$ is abelian. Moreover, we have $f(G)=\frac{1}{4}$ if and only if $G$ has a self-centralizing subgroup of index $2$.
\item[{\rm 2.}] If $f(G)>\frac{1}{9}$\,, then $G$ is supersolvable. Moreover, if $f(G)=\frac{1}{9}$\,, then $G$ has a self-centralizing subgroup of index $3$.
\item[{\rm 3.}] If $f(G)>\frac{1}{60}$\,, then $G$ is solvable. Moreover, we have $f(G)=\frac{1}{60}$ if and only if $G/Z(G)\cong A_5$.
\end{itemize}
\end{theorem}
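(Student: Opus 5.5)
The plan is to reduce everything to the two indices $a=[G:M(G)]$ and $b=[G:M]$, using formula (2)a), $f(G)=1/(ab)$. Since $M(G)\le M$ we have $a\ge b$. If $G$ is nonabelian, then $M(G)\neq G$ (because $M(G)$ is abelian), so $a\ge 2$. Two further facts will be used repeatedly.
\begin{itemize}
\item[(i)] If $b=1$, then $M=G$, so $M(G)=C_G(G)=Z(G)$ and $a=[G:Z(G)]$.
\item[(ii)] By (2)b), $f(M)=b^2f(G)$, and ${\cal CD}(M)={\cal CD}(G)$. So if $M$ turns out to be abelian, then $M(G)=M$ and $a=b$.
\end{itemize}

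\textbf{Part 1.} Let $G$ be nonabelian. If $b=1$, then by (i) $a=[G:Z(G)]\ge 4$, since $G/Z(G)$ is not cyclic. If $b\ge 2$, then $a\ge b\ge 2$. In both cases $ab\ge 4$, so $f(G)\le\frac14$.

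For the equality case $ab=4$ there are two possibilities.
\begin{itemize}
\item $(a,b)=(2,2)$. Then $M(G)=M$ is abelian, self-centralizing, and of index $2$.
\item $(a,b)=(4,1)$. Then $G/Z(G)\cong C_2\times C_2$. For any $x\notin Z(G)$, the subgroup $H=\langle Z(G),x\rangle$ is abelian of index $2$. We have $H\subseteq C_G(H)\neq G$, so $C_G(H)=H$.
\end{itemize}
Conversely, suppose $H$ is self-centralizing of index $2$. Then $H$ is abelian and $m_G(H)=|G|^2/4$, so $f(G)\ge\frac14$. Since $G$ is nonabelian, $f(G)=\frac14$.

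\textbf{Part 2.} Assume $f(G)>\frac19$, i.e. $ab\le 8$, and split on $b$.
\begin{itemize}
\item $b=1$. By (i), $|G/Z(G)|\le 8$, so $G/Z(G)$ is supersolvable. A central extension of a supersolvable group is supersolvable: refine by subgroups of $Z(G)$, all of which are normal. So $G$ is supersolvable.
\item $b\ge 2$. Then $f(M)=b^2f(G)>\frac49>\frac14$, so $M$ is abelian by Part 1. By (ii), $a=b$, so $b^2\le 8$ and $b=2$.
\end{itemize}
In the second case $G$ has a normal abelian subgroup $M$ of index $2$. Every $G$-chief factor inside $M$ is an irreducible $\mathbb F_q[C_2]$-module, hence $1$-dimensional. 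Therefore all chief factors of $G$ are cyclic of prime order, and $G$ is supersolvable.

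For $f(G)=\frac19$ we have $ab=9$, so $(a,b)\in\{(9,1),(3,3)\}$.
\begin{itemize}
\item $(3,3)$. Here $M(G)=M$ is abelian, self-centralizing, and of index $3$.
\item $(9,1)$. Here $G/Z(G)\cong C_3\times C_3$, because it cannot be cyclic. As in Part 1, $\langle Z(G),x\rangle$ with $x\notin Z(G)$ is self-centralizing of index $3$.
\end{itemize}

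\textbf{Part 3.} I would argue by induction on $|G|$, assuming $ab<60$.
\begin{itemize}
\item $b=1$. By (i), $|G/Z(G)|<60$, so $G/Z(G)$ is solvable, and hence so is $G$.
\item $b\ge 2$. Then $f(M)=b^2f(G)>\frac1{60}$ and $|M|<|G|$, so $M$ is solvable by induction. Also $b^2\le ab<60$ forces $|G/M|=b\le 7$, so $G/M$ is solvable. Hence $G$ is solvable.
\end{itemize}

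\emph{Converse of the equality claim.} Suppose $G/Z(G)\cong A_5$. Since $m_G(H)\le m_G(HZ(G))$, it suffices to consider $H\ge Z(G)$. Then $C_G(H)/Z(G)\subseteq C_{A_5}(H/Z(G))$. The values of $|X||C_{A_5}(X)|$ over subgroups $X\le A_5$ are at most $60$; for instance they equal $25,16,9,8$ for $X\cong C_5,V_4,C_3,C_2$ respectively. Hence $m_G(H)\le 60|Z(G)|^2=|G||Z(G)|$, so $m(G)=|G||Z(G)|$ and $f(G)=\frac1{60}$.

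\emph{Forward direction: the main obstacle.} Assume $f(G)=\frac1{60}$, so $ab=60$. If $b\ge 2$, the argument above shows $G$ is solvable, and I must show such a configuration cannot occur. Likewise, if $b=1$ and $G/Z(G)$ is a solvable group of order $60$, I must rule it out. My first attempt is to use the constraint $m_G(A)\le |G||Z(G)|$ for every abelian subgroup $A$, which becomes $[A:Z(G)]^2\le 60$ for self-centralizing abelian $A$. In the case $b=1$, I would combine this with the explicit list of solvable groups of order $60$, looking in each for a large abelian section that lifts to a subgroup with too big a measure. In the case $b\ge 2$, the value $f(M)=b^2/60$ with $b\le 7$ must be incompatible with the already-proved possible values of $f$ on $M$, and I would exclude these cases one by one using Parts 1 and 2 applied to $M$.
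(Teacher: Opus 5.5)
Parts 1 and 2 and the solvability claim of Part 3 are correct and follow essentially the paper's route: reduce to $a=[G:M(G)]$, $b=[G:M]$ and pass to $M$ via (2)b). Your proof that $G/Z(G)\cong A_5$ implies $f(G)=\frac1{60}$, via the bound $|X||C_{A_5}(X)|\le 60$, is also correct. The paper does not prove that direction at all.

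The genuine gap is the other direction, $f(G)=\frac1{60}\Rightarrow G/Z(G)\cong A_5$. You only sketch it, and the sketch for $b\ge 2$ fails as stated. For $b=2$ you get $f(M)=\frac1{15}$, and for $b=3$ you get $f(M)=\frac{3}{20}$. Parts 1 and 2 applied to $M$ exclude neither value. Both are at most $\frac14$. Moreover $\frac1{15}<\frac19$, and $\frac3{20}>\frac19$ only tells you $M$ is supersolvable. (Your plan does handle $b=4,5,6$, since there $f(M)>\frac14$ forces $M$ abelian and hence $a=b$.)

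Three ideas are missing.
\begin{itemize}
\item[(1)] Use $b\mid a$, which holds because $M(G)\le M$, not just $a\ge b$. Then $b^2\mid ab=60$, so $b\in\{1,2\}$.
\item[(2)] For $b=2$, use that $M$ is the largest member of ${\cal CD}(M)={\cal CD}(G)$. Hence $M(M)=C_M(M)=Z(M)$ and $f(M)=1/[M:Z(M)]$. So $[M:Z(M)]=15$, which makes $M/Z(M)$ cyclic and $M$ abelian, contradicting $f(M)=\frac1{15}$.
\item[(3)] For $b=1$ you have $[G:Z(G)]=60$. If $G/Z(G)\not\cong A_5$, it is solvable. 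It therefore has a subgroup of order $15$, for instance a Hall $\{3,5\}$-subgroup, or use that its Sylow $5$-subgroup is normal. Any group of order $15$ is cyclic.
\end{itemize}
Cyclicity is what matters in (3): your ``large abelian section'' must be cyclic, because the preimage of a non-cyclic abelian subgroup of $G/Z(G)$ need not be abelian. Let $H$ be the preimage of the subgroup of order $15$. Then $Z(G)\le Z(H)$ and $H/Z(G)$ is cyclic, so $H$ is abelian of index $4$. This gives $f(G)\ge |H|^2/|G|^2=\frac1{16}>\frac1{60}$, a contradiction. This is exactly how the paper concludes.
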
\newpage

Note that $D_8$, $S_3$ provide examples for item 1, $A_4$ for item 2 and $A_5$, ${\rm SL}(2,5)$ for item 3. Furthermore, for each of these groups $G$ and each finite abelian group $A$ with $(|G|,|A|)=1$, the group $G\times A$ also provides an example.
\bigskip

Next we are interested in characterizing positive integers $d$ such that there exists a finite group $G$ with $f(G)=\frac{1}{d}$\,.

\begin{theorem}
Let $d$ be a positive integer. Write $d=p_1^{n_1}\cdots p_k^{n_k}$, where the $p_i$ are distinct primes, $i\in\{1,...,k\}$. The following hold:
\begin{itemize}
\item[{\rm 1.}] If $n_1=\dots=n_k=0$ or $n_i\geq 2$ for all $i=1,\dots,k$, then there exists a finite group $G$ with $f(G)=\frac{1}{d}$\,.
\item[{\rm 2.}] If $n_1=\dots=n_k=1$, then $f(G)=\frac{1}{d}$ for no finite group $G$.
\end{itemize}
\end{theorem}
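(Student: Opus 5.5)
For part 1, the case $n_1=\dots=n_k=0$ is just $d=1$, witnessed by any abelian group. When $n_i\geq 2$ for all $i$, I would use the multiplicativity (1) of $f$ for groups of coprime orders: it suffices, for each prime $p$ and each $n\geq 2$, to produce a $p$-group $G_{p,n}$ with $f(G_{p,n})=\frac{1}{p^n}$, and then take $G=\prod_i G_{p_i,n_i}$.

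For even exponents I would use extraspecial groups. Let $E$ be extraspecial of order $p^{1+2m}$. Then $m_E(E)=|E||Z(E)|=p^{2m+2}$, and a maximal abelian subgroup of order $p^{m+1}$ is self-centralizing and gives the same value. I would show that no subgroup does better: writing $\overline{H}=HZ(E)/Z(E)$ inside the symplectic space $E/Z(E)$, one has $|\overline{C_E(H)}|=p^{2m}/|\overline{H}|$. Hence $m(E)=p^{2m+2}$ and
\[
f(E)=\frac{p^{2m+2}}{p^{4m+2}}=\frac{1}{p^{2m}}.
\]
For odd $n=2m+3\geq 3$ I would use the known fact that $m(G_1\times G_2)=m(G_1)m(G_2)$ holds for arbitrary finite groups. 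This fact is stronger than (1) as stated, so it has to be quoted (Brewster--Wilcox) or proved. With it, the odd case reduces to finding one $p$-group $T_p$ with $f(T_p)=\frac{1}{p^3}$; then $T_p\times E$ with $|E|=p^{1+2m}$ gives $f=\frac{1}{p^{2m+3}}$.

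Finding $T_p$ is the step I expect to be the main obstacle. Formula (2a) says $f(G)^{-1}=[G:M(G)][G:M]$, and since $M(G)\leq M$ this equals $[G:M]^2\,[M:M(G)]$. So I need $[G:M]=1$ and $[G:M(G)]=p^3$, i.e.\ $G\in{\cal CD}(G)$ with $[G:Z(G)]=p^3$; the alternative $[G:M]=p$ would force $[M:M(G)]=p$ and $M$ of class at most $2$. The condition $G\in{\cal CD}(G)$ forbids abelian subgroups $A$ with $|A|^2>|G||Z(G)|$. I would search among groups of order $p^5$ or $p^6$ with $|Z(G)|=p^2$ or $p^3$ and no large abelian subgroups, for instance suitable central products or quotients of ${\rm UT}(4,p)$, and then verify directly that $m(G)=|G||Z(G)|$. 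I am least sure of this step and would check small cases computationally first.

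For part 2, let $d>1$ be squarefree and suppose $f(G)=\frac{1}{d}$. Since $d=[G:M]^2[M:M(G)]$ is squarefree, we get $M=G$. Then $Z(G)=C_G(G)\in{\cal CD}(G)$, and $Z(G)\subseteq M(G)$ forces $M(G)=Z(G)$. Hence $m(G)=|G||Z(G)|$ and $[G:Z(G)]=d$. By H\"older's theorem, a group of squarefree order is metacyclic with a split extension structure. So $G/Z(G)$ has a cyclic normal subgroup $K/Z(G)$ of order $a$ with a cyclic complement $L/Z(G)$ of order $b$, where $ab=d$ and $(a,b)=1$. Both $K$ and $L$ are abelian, being cyclic modulo a central subgroup. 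Maximality of $m(G)$ then gives
\[
|K|^2\leq m_G(K)\leq |G||Z(G)|,\qquad |L|^2\leq m_G(L)\leq |G||Z(G)|,
\]
that is, $a^2\leq ab$ and $b^2\leq ab$. Thus $a=b$, and since $(a,b)=1$ this gives $a=b=1$, so $d=1$, a contradiction.
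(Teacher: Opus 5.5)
\textbf{Verdict: genuine gap.} Part 1 is not proved whenever some $n_i$ is odd, because you never exhibit a $p$-group $T_p$ with $f(T_p)=p^{-3}$.

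The rest is fine. Your Part 2 is correct and is essentially the paper's argument. The paper writes $G/Z(G)\cong ZM(m,n,r)$ and assumes $m<n$; your symmetric use of $K$ and $L$ to get $a\le b$ and $b\le a$ is slightly cleaner. Your extraspecial computation for even exponents is also correct. The general identity $m(G_1\times G_2)=m(G_1)m(G_2)$ is elementary, since $C_{G_1\times G_2}(H)=C_{G_1}(\pi_1H)\times C_{G_2}(\pi_2H)$ and $|H|\le|\pi_1H||\pi_2H|$. But the odd-exponent construction you flagged as the main obstacle is the only nontrivial content of Part 1, and it is missing.

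The missing idea is the family the paper uses for all $n\ge 2$ at once. Let $G_p^n$ be generated by $x_1,\dots,x_n$ of order $p$, whose commutators $x_{i,j}=[x_i,x_j]$ ($i<j$) are central of order $p$ and independent. Then $|G_p^n|=p^{n+\binom{n}{2}}$; this also makes sense for $p=2$, e.g.\ $G_2^2\cong D_8$.

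Here is why it works. Put $Z=(G_p^n)'$ and $V=G_p^n/Z\cong\mathbb{F}_p^n$. The commutator map $V\times V\to Z\cong\Lambda^2V$ is $(u,v)\mapsto u\wedge v$, so $Z(G_p^n)=Z$. Take $H\le G_p^n$ with $k=\dim HZ/Z$. The centralizer satisfies $C_{G_p^n}(H)/Z=\{v\in V: v\wedge u=0 \text{ for all } u\in HZ/Z\}$, which is $V$, $HZ/Z$, or $0$ according as $k=0$, $k=1$, or $k\ge 2$. Combined with $|H|\le p^{\binom{n}{2}+k}$, this gives $m_{G_p^n}(H)\le p^{n^2}$ in all three cases; the case $k=1$ uses $n\ge 2$. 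Equality holds for $H=G_p^n$. Hence $m(G_p^n)=p^{n^2}$ and $f(G_p^n)=p^{n^2}/p^{n^2+n}=p^{-n}$.

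In particular $T_p=G_p^3$, of order $p^6$ with center of order $p^3$, is exactly the kind of group you were searching for. Once $G_p^n$ is available for every $n\ge 2$, you need neither extraspecial groups nor multiplicativity beyond the coprime case (1).
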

\bigskip

By Theorem 1.2 it follows that the largest values of the function $f$ are $1$, $\frac{1}{4}$\,, $\frac{1}{8}$\,, $\frac{1}{9}$\,, ..., and so on. We also observe that for certain finite groups $G$ we have $f(G)=\frac{1}{d}$\,, where $d=p_1^{n_1}\cdots p_k^{n_k}$ satisfies the property that there exist $i,j\in\{1,\dots,k\}$ with $n_i\geq 2$ and $n_j=1$. An example is the group $G=S_4$, for which we have $f(G)=\frac{1}{24}=\frac{1}{2^3\cdot 3}$\,. This leads to the following natural problem.
\bigskip

\noindent{\bf Open problem.} Determine all positive integers $d$ such that there exists a finite group $G$ such that $f(G)=\frac{1}{d}$\,.
\bigskip

Most of our notation is standard and will not be repeated here. Basic definitions and results on groups can be found in \cite{6}.

\section{Proofs of the main results}

First of all, we recall some criteria for supersolvability of finite groups (see Lemmas 3.4 and 3.12 of \cite{1}).

\begin{lemma}
Given a finite group $G$, we have:
\begin{itemize}
\item[{\rm 1.}] $G$ is supersolvable if and only if $G/Z(G)$ is supersolvable.
\item[{\rm 2.}] If $G$ has an abelian subgroup of index $2$, then $G$ is supersolvable.
\end{itemize}      
\end{lemma}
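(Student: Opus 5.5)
This lemma is cited from Lemmas 3.4 and 3.12 of \cite{1}, but both parts can be proved directly with standard tools. The plan is to prove item 2 first, since item 1 then follows from it with one extra observation.

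\emph{Item 2.} Let $A$ be an abelian subgroup of index $2$ in $G$. Then $A\trianglelefteq G$, and we build a $G$-invariant series through $A$ with cyclic factors. Take a chief series of $G$ passing through $A$. Every chief factor $H/K$ with $H\leq A$ is an irreducible module for $G/A\cong C_2$, where $G$ acts by conjugation and $A$ acts trivially because $A$ is abelian. Such a factor is elementary abelian $p$-group. We split by $p$:
\begin{itemize}
\item[$\cdot$] If $p$ is odd, then $2$ is coprime to $p$, and the irreducible $\mathbb{F}_p C_2$-modules are one-dimensional. This holds because $x^2-1$ splits over $\mathbb{F}_p$ into distinct linear factors, so the generator acts diagonalizably with eigenvalues $\pm1$.
\item[$\cdot$] If $p=2$, a $2$-group acting on a nonzero $\mathbb{F}_2$-space has nonzero fixed points, so any irreducible module is trivial and therefore of order $2$.
\end{itemize}
The top factor $G/A$ has order $2$. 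Hence every chief factor of $G$ is cyclic of prime order, which means $G$ is supersolvable.

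\emph{Item 1.} The forward direction is immediate, because quotients of supersolvable groups are supersolvable. For the converse, suppose $G/Z(G)$ is supersolvable, and take a normal series $Z(G)=N_0\leq N_1\leq\dots\leq N_r=G$ in which each $N_i\trianglelefteq G$ and each $N_{i+1}/N_i$ is cyclic. This is possible by lifting the chief series of $G/Z(G)$, all of whose factors are cyclic. Next refine below $Z(G)$. Every subgroup of $Z(G)$ is normal in $G$, and the abelian group $Z(G)$ has a series $1=Z_0\leq\dots\leq Z_s=Z(G)$ with cyclic factors. Concatenating the two series gives a normal series of $G$ with all factors cyclic, so $G$ is supersolvable.

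Neither step is really hard. The point needing most care is the module-theoretic claim in item 2: that the chief factors inside $A$ are cyclic. This rests on the facts that the acting group $G/A$ has order $2$ and that $A$ acts trivially on each factor because $A$ is abelian. I would state these two facts explicitly.
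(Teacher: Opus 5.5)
Your proof is correct. The paper gives no argument of its own for this lemma: it imports both items as Lemmas 3.4 and 3.12 of \cite{1}. So the comparison is between a bare citation and your self-contained proof.

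For item 1 you use the standard argument: lift a normal series with cyclic factors from $G/Z(G)$, then refine inside $Z(G)$, all of whose subgroups are normal in $G$. Your proof of item 2, via a chief series through $A$, is also sound. The chief factors below $A$ are elementary abelian, and $A$ acts trivially on them. Irreducible $\mathbb{F}_pC_2$-modules are one-dimensional, by diagonalizability for odd $p$ and by the fixed-point argument for $p=2$.

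What the citation buys is brevity. What your route buys is that it needs nothing beyond the definition of supersolvability, and it shows exactly where the hypothesis ``index $2$'' is used. That is worth seeing next to item 2 of Theorem 1.1. For index $3$ the argument breaks, because $x^2+x+1$ is irreducible over $\mathbb{F}_2$, so $C_3$ has a two-dimensional irreducible $\mathbb{F}_2$-module. That module gives $A_4=V_4\rtimes C_3$, which is exactly the paper's boundary example: $f(A_4)=\frac{1}{9}$, $A_4$ has a self-centralizing abelian subgroup of index $3$, and $A_4$ is not supersolvable.

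One small expository point: you announce that item 1 will follow from item 2, but your proof of item 1 is, correctly, independent of item 2. That sentence should simply be dropped.
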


Next, we make the following remark.

\begin{lemma}
Let $G$ be a finite non-abelian group. Then 
\begin{equation}
f(G)\leq\dd\frac{1}{p^2}\,,\nonumber
\end{equation}where $p$ is the smallest prime divisor of the order of $G$. Moreover, the equality holds if and only if $G$ has a self-centralizing subgroup of index $p$.
\end{lemma}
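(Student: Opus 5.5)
We will work directly from formula a) of (2), namely $f(G)=\frac{1}{[G:M(G)][G:M]}$, where $M(G)$ and $M$ are the minimum and maximum members of ${\cal CD}(G)$. Since $M(G)\subseteq M$ are subgroups of $G$, both indices $[G:M(G)]$ and $[G:M]$ are divisors of $|G|$. They satisfy $[G:M(G)]=[G:M][M:M(G)]$. The plan is to show that when $G$ is non-abelian, $[G:M(G)]\ge p$ and $[G:M]\ge 1$ cannot both be small simultaneously, and more precisely that the product is at least $p^2$.

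\textbf{Step 1: $M(G)\neq G$.} Since $M(G)$ is abelian and $G$ is non-abelian, $M(G)$ is a proper subgroup. Hence $[G:M(G)]\ge p$, because every nontrivial index divides $|G|$ and is therefore at least $p$.

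\textbf{Step 2: the product is at least $p^2$.} We split into two cases.
\begin{itemize}
\item[$\cdot$] If $M\neq G$, then $[G:M]\ge p$ as well, so $[G:M(G)][G:M]\ge p\cdot p=p^2$.
\item[$\cdot$] If $M=G$, then $G\in{\cal CD}(G)$, so its centralizer $C_G(G)=Z(G)$ also lies in ${\cal CD}(G)$. Since $M(G)$ is the minimum and $Z(G)\subseteq M(G)$, this forces $M(G)=Z(G)$. Now $G/Z(G)$ cannot be cyclic (for non-abelian $G$), so it is not of prime order. Every prime dividing $[G:Z(G)]$ is at least $p$ and at least two primes are involved counting multiplicity, hence $[G:Z(G)]\ge p^2$.
\end{itemize}
In both cases $f(G)\le 1/p^2$.

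\textbf{Step 3: the equality case.} For the forward direction, suppose $f(G)=1/p^2$. In the case $M=G$ we get $|G/Z(G)|=p^2$ with $Z(G)=M(G)$. Take any $x\notin Z(G)$ and set $H=\langle x,Z(G)\rangle$. Then $H$ is abelian of index $p$, and $C_G(H)=H$: indeed $C_G(H)\supseteq H$, and $C_G(H)\neq G$ since $x\notin Z(G)$, so maximality of the index-$p$ subgroup gives equality. In the case $M\neq G$, equality forces $[G:M]=p$ and $[M:M(G)]=1$, so $M=M(G)$ is abelian and lies in ${\cal CD}(G)$. Hence $C_G(M)\in{\cal CD}(G)$ and, since $M$ is the maximum of ${\cal CD}(G)$, $C_G(M)\subseteq M$. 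Since $M$ is abelian, also $M\subseteq C_G(M)$, so $M=C_G(M)$ is self-centralizing of index $p$.

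For the converse, let $H$ be self-centralizing of index $p$. Then $H=C_G(H)$ is abelian and $m_G(H)=|H|^2=|G|^2/p^2$, so $m(G)\ge |G|^2/p^2$. Combined with the inequality from Step 2, this gives $f(G)=1/p^2$.

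The main obstacle is the case $M=G$, where one must see that $M(G)=Z(G)$ and use the non-cyclicity of $G/Z(G)$; the rest is index bookkeeping.
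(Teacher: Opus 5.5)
Your proof is correct and follows the same route as the paper. Like the paper, you use formula a) of (2), split into the cases $M<G$ and $M=G$ (where $M(G)=Z(G)$), and identify the same two equality cases: $M=G$ with $[G:Z(G)]=p^2$, or $M=M(G)$ of index $p$. You also supply details the paper leaves implicit, namely why $M(G)=Z(G)$ when $M=G$, the explicit self-centralizing subgroup $\langle x,Z(G)\rangle$ in the first equality case, and the converse direction via $m_G(H)=|H|^2$.
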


\begin{proof}
Since $G$ is not abelian, we have $M(G)\neq G$ and $\frac{1}{[G:Z(G)]}\leq\frac{1}{p^2}$\,. If $M<G$, then $[G:M]\geq p$, $[G:M(G)]\geq p$, and (2), a),  implies $f(G)\leq\frac{1}{p^2}$\,. If $M=G$, then $M(G)=Z(G)$, and again $f(G)\leq\frac{1}{p^2}$ by (2), a). In any case, $f(G)\leq\frac{1}{p^2}$\,, as required.

The equality holds if and only if either $M(G)=Z(G)$, $M=G$ and $[G:Z(G)]=p^2$, or $M(G)=M$ is of index $p$. In both cases $G$ has a self-centralizing subgroup of index $p$. 
\end{proof}

We are now able to prove our main results.

\bigskip\noindent{\bf Proof of Theorem 1.1.} 
\begin{itemize}
\item[{\rm 1.}] This follows directly from Lemma 2.2.
\item[{\rm 2.}] Assume that $f(G)>\frac{1}{9}$\,. Then $[G:M(G)][G:M]<9$ by (2), a). Since $[G:M]$ divides $[G:M(G)]$, we have $[G:M]\in\{1,2\}$.

\hspace{5mm}If $[G:M]=1$, that is $M=G$ and $M(G)=Z(G)$, then $[G:Z(G)]<9$. Since every group of order at most $8$ is supersolvable, we obtain that $G/Z(G)$ is supersolvable and the conclusion follows from item 1 of Lemma 2.1.

\hspace{5mm}If $[G:M]=2$, then (2), b), implies that $f(M)>\frac{4}{9}>\frac{1}{4}$\,, and so $M$ is abelian by the above item 1. Now the conclusion follows from item 2 of Lemma 2.1.

Assume now that $f(G)=\frac{1}{9}$\,. Arguing as in the proof of Lemma 2.2, we get that $G$ has a self-centralizing subgroup of index $3$.
\item[{\rm 3.}] We proceed by induction on $|G|$. Clearly, the statement holds for $|G|=1$. Assume that it holds for any finite group of order less than $|G|$. Since $f(G)>\frac{1}{60}$\,, by (2), b), it follows that $f(M)>\frac{1}{60}$\,.\newpage 
    
\hspace{5mm}If $M\neq G$, then $|M|<|G|$ and so $M$ is solvable by the inductive hypothesis. On the other hand, since $[G:M(G)][G:M]<60$ we get $[G:M]<60$ and hence $G/M$ is solvable. These imply that $G$ itself is solvable. 

\hspace{5mm}If $M=G$, then $M(G)=Z(G)$ and the inequality 
\begin{equation}
[G:M(G)][G:M]<60\nonumber 
\end{equation}yields to $[G:Z(G)]<60$, that is $G/Z(G)$ is solvable. Since $Z(G)$ is abelian, it is also solvable. Thus $G$ is solvable, too.

Assume now that $f(G)=\frac{1}{60}$\,. Then $[G:M(G)][G:M]=60$ and since $[G:M]$ divides $[G:M(G)]$, we distinguish the following two cases:

\hspace{5mm}a) $[G:M]=2$\\
Then $f(M)=\frac{1}{15}$ by (2), b), which implies that $[M:Z(M)]=15$. Thus $M/Z(M)$ is cyclic and consequently $M$ is abelian, a contradiction.

\hspace{5mm}b) $[G:M]=1$\\
Then $M=G$, $M(G)=Z(G)$ and $[G:Z(G)]=60$. If $G/Z(G)\not\cong A_5$, then it is easy to see that $G/Z(G)$ has a cyclic subgroup of order $15$, say $H/Z(G)$. It follows that $H/Z(H)$ is cyclic and therefore $H$ is abelian. This leads to
\begin{equation}
\dd\frac{1}{60}=f(G)=\dd\frac{m(G)}{|G|^2}\geq\dd\frac{m_G(H)}{|G|^2}\geq\dd\frac{|H|^2}{|G|^2}=\dd\frac{1}{16}\,,\nonumber
\end{equation}a contradiction. Thus $G/Z(G)\cong A_5$, as desired.\qed
\end{itemize}

\bigskip\noindent{\bf Proof of Theorem 1.2.} 
\begin{itemize}
\item[{\rm 1.}] Let $p$ be a prime and $n\in\mathbb{N}$, $n\geq 2$. We will prove that there exists a finite $p$-group $G_p^n$ such that $f(G_p^n)=\frac{1}{p^n}$\,. Generalizing Proposition 2.3 of \cite{2}, we construct
\begin{equation}
G_p^n=\langle x_1, x_2, \dots, x_n, x_{1,2}, x_{1,3}, \dots, x_{n-1,n}\rangle,\nonumber
\end{equation}where
\begin{itemize}
\item[-] $x_1^p=x_2^p=\dots=x_n^p=x_{1,2}^p=x_{1,3}^p=\dots=x_{n-1,n}^p=1$,
\item[-] $[x_i,x_j]=x_{i,j}, \forall\, 1\leq i<j\leq n$,
\item[-] all other commutators between the generators equal $1$.
\end{itemize}Then $|G_p^n|=p^{n+\frac{n(n-1)}{2}}$ and $m(G_p^n)=p^{n^2}$, implying that $f(G_p^n)=\frac{1}{p^n}$\,, as desired.

If $n_i\geq 2$ for all $i=1,\dots,k$, then it suffices to take $G=G_{p_1}^{n_1}\times\cdots\times G_{p_k}^{n_k}$ and then to apply (1).
\item[{\rm 2.}] Assume that there is a finite group $G$ with $f(G)=\frac{1}{p_1\cdots p_k}$\,. Then the conditions $[G:M(G)][G:M]=p_1\cdots p_k$ and $[G:M] \mid [G:M(G)]$ imply that $M=G$ and $M(G)=Z(G)$. It follows that $G/Z(G)$ is a ZM-group, say $G/Z(G)\cong ZM(m,n,r)$. We can suppose that $m<n$ (the case $n<m$ is similar). Similarly with the proof of item 3 of Theorem 1.2, we obtain that $G$ has an abelian subgroup $H$ of index $m$. Then
\begin{equation}
\dd\frac{1}{mn}=\dd\frac{1}{p_1\cdots p_k}=f(G)=\dd\frac{m(G)}{|G|^2}\geq\dd\frac{m_G(H)}{|G|^2}\geq\dd\frac{|H|^2}{|G|^2}=\dd\frac{1}{m^2}\,,\nonumber
\end{equation}that is $m\geq n$, a contradiction. This completes the proof.\qed
\end{itemize}

Finally, we note that for an extraspecial $p$-group $P$ of order $p^{2n+1}$ we have $f(P)=\frac{1}{p^{2n}}$ (see e.g. \cite{4}, Example 2.8, or \cite{7}, Theorem 4.3.4). Then for $d=p_1^{2n_1}\cdots p_k^{2n_k}$ with $n_i\in\mathbb{N}^*$, $\forall\, i=1,\dots,k$, we can also take $G=G_1\times\cdots\times G_k$, where $G_i$ is an extraspecial $p_i$-group of order $p_i^{2n_i+1}$, $i=1,\dots,k$, obtaining $f(G)=\frac{1}{d}$\,.

\bigskip\noindent{\bf Acknowledgements.} The author is grateful to the reviewers for their remarks which improve the previous version of the paper.

\vspace*{5ex}\small

\hfill
\begin{minipage}[t]{5cm}
Marius T\u arn\u auceanu \\
Faculty of  Mathematics \\
``Al.I. Cuza'' University \\
Ia\c si, Romania \\
e-mail: {\tt tarnauc@uaic.ro}
\end{minipage}

\end{document}